\newtheorem{theorem}{Theorem}
\newtheorem{corollary}[theorem]{Corollary}
\newtheorem{example}[theorem]{Example}
\newtheorem{proposition}[theorem]{Proposition}
\newtheorem{lemma}[theorem]{Lemma}
\newtheorem{theorem*}{Theorem}
\newtheorem{question*}[theorem*]{Question}
\newtheorem{conjecture*}[theorem*]{Conjecture}
\newtheorem{corollary*}[theorem*]{Corollary}
\newtheorem{theorem*e}{Teorema}
\newtheorem{question*e}[theorem*e]{Pregunta}
\newtheorem{conjecture*e}[theorem*e]{Conjetura}
\newtheorem{corollary*e}[theorem*e]{Corolario}
\newcommand{\Fix}{\mathrm{Fix}}
\newcommand{\Per}{\mathrm{Per}}
\renewcommand{\int}{\mathrm{int}}
\newcommand{\id}{\mathrm{id}}
\newcommand{\dist}{\mathrm{dist}}
\newcommand{\vv}[1]{\overrightarrow{#1}}
\title{Indices of fixed points not accumulated by periodic points}
\author{Luis Hernández--Corbato}
\address{IMPA, Estrada dona Castorina 110, Rio de Janeiro, Brazil.}
\email{luishcorbato@mat.ucm.es, lcorbato@impa.br}
\thanks{The author has been supported by CNPq (Conselho Nacional de Desenvolvimento Científico  e Tecnológico - Brasil)
and partially by MICINN grant MTM2012-30719. The author also acknowledges kind support received from IMPA}
\subjclass[2010]{37C25, 54H25}
\keywords{Fixed point index, Dold relations}
\begin{document}

\begin{abstract}
We prove that for every integer sequence $I$ satisfying Dold relations there exists
a map $f : \mathds{R}^d \to \mathds{R}^d$, $d \ge 2$, such that $\Per(f) = \Fix(f) = \{o\}$, where $o$ denotes the origin,
and $(i(f^n, o))_n = I$.
\end{abstract}

\maketitle

\section{Introduction}

Given a map $f$ defined in a Euclidean space onto itself and $p$ a fixed point of $f$, the fixed point index
or Lefschetz index of $f$ at $p$, denoted $i(f,p)$,
is an integer which measures the multiplicity of $p$ as a fixed point of $f$.
The definition requires the point $p$ to be isolated in the set of fixed points of $f$, which will be denoted $\Fix(f)$.
The index is a topological invariant of the local dynamics around $p$. Since a fixed point
of a map is also fixed by any of its iterates $f^n$, $n \ge 1$, the integer $i(f^n, p)$ is defined
as long as $p$ remains isolated in $\Fix(f^n)$. The integer sequence $(i(f^n, p))_{n = 1}^{\infty}$
will be denominated fixed point index sequence throughout this article.
In general, it is very difficult to find constraints for these invariants.
In fact, the unique global rule satisfied by fixed point index sequences is encompassed in the so--called
Dold relations, see \cite{dold}, which are described in Section \ref{sec:preliminaries}.
One of the most complete references on fixed point index theory is \cite{marzantowicz}.

In dimension 1, the only possible values of the index of a fixed point are -1, 0 and 1. From dimension 2 and on
any integer sequence satisfying Dold relations may appear as a fixed point index sequence of some map.
Some restrictions appear as we impose extra conditions over the map $f$. For instance, Shub and Sullivan proved in
\cite{shubsullivan} that the sequence is periodic when $f$ is $C^1$.
Recently, in \cite{smooth} Graff, Jezierski and Nowak--Przygodzi have given a complete description in the $C^1$ case.
Further, if $f$ is a homeomorphism of a surface
$(i(f^n, p))_n$ follows a very restrictive periodic pattern, see for example \cite{brown, lecalvezyoccoz,
lecalvezyoccozconley, rportalsalazar, bonino}. Periodicity of the sequence has been found to be true
in dimension 3 just for homeomorphisms and locally maximal fixed points
(i.e., points $p$ which have a neighborhood $V$ such that $\{p\}$ is the maximal invariant set in $V$), see
\cite{london, gt}.

Any of the previously described constraints disappear when the hypothesis are substantially weakened. For instance,
in the planar case if $f$ is no longer invertible then Dold relations are the only constraints remaining.
In \cite{graff}, Graff and Nowak--Przygodzki showed how to define a map in the plane fixing the origin and
such that the fixed point index sequence is a given integer sequence satisfying Dold relations.
Their map is constructed by gluing pieces made up of small radial sectors carrying a prescribed dynamics.
This operation produces lots of periodic points which accumulate in the fixed one.
Incidentally, notice that, in contrast, if the map is a homeomorphism and the fixed point is accumulated by $\Per(f)$ but
not by $\Fix(f^n)$ then $i(f^n, p) = 1$ (see \cite{pelikanslaminka} and also \cite{lecalvezENS}, pag. 145).

It is somehow surprising that if the fixed point $p$ is locally maximal (in the sense previously described)
and $f$ is a continuous map in the plane, the fixed point index sequence satisfies the following three constraints
(see \cite{dcds, graffpaco}):
$i(f, p)$ is bounded from above by 1, the sequence is periodic and every $a_k$ is non--positive for $k \ge 2$ (see Section
\ref{sec:preliminaries} for a definition of $a_k$). It is not known to what extent these constraints remain valid.
In this work we consider the hypothesis of isolation as a periodic point, which is
halfway between the locally maximal hypothesis and the unrestricted case.
In the case of homeomorphisms the behavior of the fixed point index under this hypothesis is very well
understood and similar to the locally maximal case (see \cite{brown, lecalvezENS, leroux, rportalsalazar2}).
However, we prove that for continuous maps it turns out that this weakening is enough to dissipate all three constraints:

\begin{theorem}\label{thm}
For every $d \ge 2$ and every integer sequence $I$ satisfying Dold relations
there exists a map $f : \mathds{R}^d \to \mathds{R}^d$ fixing a point $p$ such that
\begin{itemize}
\item $I = (i(f^n, p))_n$ and
\item $p$ is not accumulated by other periodic orbits of $f$.
\end{itemize}
\end{theorem}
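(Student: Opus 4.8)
Suppose $f\colon\mathds{R}^2\to\mathds{R}^2$ fixes the origin $o$, realizes $I=(i(f^n,o))_n$, and $o$ is not accumulated by $\Per(f)$. Then $F:=f\times\bigl(\tfrac12\,\id_{\mathds{R}^{d-2}}\bigr)\colon\mathds{R}^d\to\mathds{R}^d$ fixes $(o,0)$; a point $(x,y)$ is $m$-periodic for $F$ exactly when $y=0$ and $f^m(x)=x$, so $\Per(F)=\Per(f)\times\{0\}$ and $(o,0)$ is not accumulated by $\Per(F)$; and by multiplicativity of the fixed point index $i(F^m,(o,0))=i(f^m,o)\cdot i(\tfrac1{2^m}\,\id,0)=I_m$. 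Hence it suffices to treat the case $d=2$. Recall also the normal form of Dold's relations: a sequence $I=(I_n)_n$ of integers satisfies them iff all the numbers $a_k=\tfrac1k\sum_{d\mid k}\mu(k/d)I_d$ are integers, in which case $I_n=\sum_{k\mid n}k\,a_k$ for every $n$, i.e. $I=\sum_k a_k\,\mathrm{reg}_k$ with $\mathrm{reg}_k(n)=k$ when $k\mid n$ and $0$ otherwise. So it is enough to build, for each $k$, a ``local block'' contributing exactly $a_k\,\mathrm{reg}_k$ to the index sequence at a common fixed point $o$, with $o$ as its only periodic point, and to superpose all these blocks at $o$ into a single continuous $f\colon\mathds{R}^2\to\mathds{R}^2$ without creating new periodic orbits.

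\textbf{Building the blocks without periodic orbits.} This is the point of departure from \cite{graff}: there the block for $\mathrm{reg}_k$ is made of $k$ radial sectors permuted cyclically by $f$, which forces a period-$k$ orbit, and over all $k$ these orbits accumulate at $o$. Since a local homeomorphism of the plane obeys the restrictions on $(i(f^n,p))_n$ recalled in the Introduction, the blocks must be genuinely non-injective as soon as $a_k\,\mathrm{reg}_k$ is not of the allowed periodic form. Concretely, $a\,\mathrm{reg}_1$ (the constant sequence $a\in\mathds{Z}$) is realized by $z\mapsto z+c\,z^{a}$ when $a\ge 1$ and by $z\mapsto z+c\,\bar z^{|a|}$ when $a<0$, with $c$ small: near $o$ the only periodic point is $o$ and $i(f^n,o)=a$ for all $n$, because $f^n(z)-z=nc\,z^{a}+\cdots$. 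Similarly $\pm\mathrm{reg}_2$ is realized by the orientation-reversing diffeomorphisms $z\mapsto\bar z+c\,\bar z^{2}$ and $z\mapsto\bar z+c\,z^{2}$: their squares are $z+2c\,z^{2}+\cdots$ and $z+2c\,\bar z^{2}+\cdots$, so $i(f,o)=0$ and $i(f^n,o)=\pm2$ for even $n$, $0$ for odd $n$, again with $o$ as the only nearby periodic point. For general $k$ and $a_k$ one works inside a thin region with a corner at $o$ that $f$ maps into itself, installs a folding model whose $k$-th iterate has the prescribed local index there while $f,\dots,f^{k-1}$ are index-trivial, and superimposes a strict drift toward $o$ killing periodicity inside the region; since each such region is mapped into itself, no periodic orbit appears at the interfaces.

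\textbf{Assembly and the index count.} Place the block for $k$ in a region that shrinks to $o$ (an annulus, or a sector, at scale $r_k\to 0$), interpolate between consecutive blocks by fixed-point-free maps, and let the local data degenerate to the identity as the regions collapse onto $o$; the result is a continuous $f\colon\mathds{R}^2\to\mathds{R}^2$ whose only periodic point near $o$ is $o$. A winding-number computation on a small circle $|z|=\rho$ then shows that $z\mapsto f^n(z)-z$ collects precisely the local indices of the blocks with $k\mid n$, whence $i(f^n,o)=\sum_{k\mid n}k\,a_k=I_n$.

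\textbf{Main difficulty.} I expect the second step to be the crux: producing, for every $k$ and every integer $a_k$, a local model with index sequence exactly $a_k\,\mathrm{reg}_k$ and no periodic point but its corner --- forcing $f,\dots,f^{k-1}$ to be index-trivial requires a careful, genuinely non-injective fold --- and then handling the bookkeeping of gluing infinitely many such models at scales tending to $0$ so that no periodic point is created at the interfaces and the index computation at $o$ is unaffected.
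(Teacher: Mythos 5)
Your reduction to $d=2$ and your recollection of the Dold normal form are both fine, and your explicit models for $a\,\mathrm{reg}_1$ and $\pm\,\mathrm{reg}_2$ are correct as far as they go. But the two steps you yourself flag as the crux are exactly the content of the theorem, and as written they contain gaps that are not just bookkeeping. First, the local block for general $k$: you propose a single thin region with a corner at $o$, mapped \emph{into itself}, whose iterates $f,\dots,f^{k-1}$ are index-trivial while $f^k$ contributes $k\,a_k$. No such model is exhibited, and it is not the mechanism that works: the way a $\mathrm{reg}_k$-contribution is actually produced (both in \cite{graff} and in this paper) is by $k$ radial sectors permuted cyclically, so that the sectors only return to themselves at iterates divisible by $k$. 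The paper keeps the cyclic permutation but kills the periodic orbits inside the sectors by a different device: in each sector the model $f_{m,\pm}$ has $2m+1$ invariant rays through $o$, alternately attracting (orbits tend to $o$) and repelling (orbits tend to $\infty$), so the sector is swept without any periodic point other than $o$ itself. Second, your assembly step is inconsistent with the conclusion you want. If the block for $k$ sits in an annulus at scale $r_k\to 0$ and $\Per(f)=\{o\}$, then $\Fix(f^n)=\{o\}$ and the degree of $\id-f^n$ is the same on \emph{every} small circle around $o$; hence a block confined to an annulus containing no fixed point of $f^n$ contributes nothing to the index. All the contributions must come from sectors reaching down to $o$, which every small circle crosses.

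Once the blocks are sectors, the real difficulty appears, and it is the one your proposal never addresses: there are infinitely many periods $k$, hence infinitely many families of sectors, and their angular positions must accumulate somewhere on $S^1$. At an accumulation angle the radial displacement of $f$ is forced to degenerate to $0$ by continuity, and if that angle were periodic for the angular dynamics you would get an entire invariant ray (in fact circles' worth) of periodic points accumulating at $o$, destroying the second bullet of the theorem. "Interpolating between consecutive blocks by fixed-point-free maps" does not touch this limit phenomenon. The paper's solution is to build $f$ as a skew product over the doubling map $e_2(\theta)=2\theta \bmod 1$, to place the $k$ sectors over a period-$k$ orbit of $e_2$, with radial drift $r\mapsto r-\dist(\theta,\Lambda')$ outside the sectors, and --- this is the key point --- to choose one periodic orbit $\Lambda$ of each required period so that its derived set $\Lambda'$ contains \emph{no} periodic point of $e_2$. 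Producing such a $\Lambda$ is a nontrivial combinatorial fact, proved via the cube-free property of the Prouhet--Thue--Morse sequence (periodic binary words read off its prefixes have no conjugate beginning with a sixth power, so their limits cannot be periodic). Without some substitute for this step your construction has no way to rule out periodic orbits accumulating at $o$, so the proposal as it stands does not prove the theorem.
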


The most interesting case included in this result is $d = 2$. For larger dimensions, it suffices to fix the map
in a plane and then retract the ambient space to that plane.

The paper is organized as follows. First, we introduce several definitions and comments along with some examples
of dynamics which are the basic pieces of our work. In Section \ref{sec:construction}, we carry out the construction of the map
which proves Theorem \ref{thm}. Our work relies on a definition whose discussion involves symbolic dynamics and is
postponed to the last section.

\section{Fixed point index}\label{sec:preliminaries}

Before we start with the basic definitions, let us go quickly through the notations and conventions used in the text.
The origin of the plane $\mathds{R}^d$ is denoted $o$ and we shall often use polar coordinates in $S^1 \times \mathds{R}$
to represent the puctured plane $\mathds{R}^2 \setminus \{o\}$. Notice a small annoyance, for later convenience
the radial coordinate takes values in $\mathds{R}$ instead of $\mathds{R}^+$ and the origin corresponds to
the end $r = - \infty$. 
We will use two conventions for the angular coordinate: it will take values either in $(-\pi, \pi]$
or in $[0,1)$. Angles (points in $S^1$) will be denoted by greek letters.
Closed arcs in $S^1$ are named intervals and each interval $J$ determines a (radial) sector in the punctured
plane which contains all the points $(\theta, r)$ such that $\theta \in J$.
The (forward) orbit of a point $x$ under a map $F$ is the set $\{x, F(x), F^2(x), \ldots\}$.
Sequences appear ubiquitously and are always indexed by the positive integers.

Let $U$ be an open subset of $\mathds{R}^d$ and $f : U \to \mathds{R}^d$ a continuous map. Suppose that $p$
is an isolated fixed point of $f$. The fixed point index of $f$ at $p$, denoted $i(f, p)$, is defined as
the degree of the map $\id - f : U \to \mathds{R}^d$ at $p$. In other words, if $B$ is a closed ball centered at $p$
and such that $\Fix(f) \cap B = \{p\}$ then $i(f, p)$ is the degree of the map
\begin{align*}
\phi : & \partial B \to S^{d-1} \\
       &  x \mapsto \frac{x - f(x)}{||x - f(x)||}
\end{align*}

As our considerations will soon be limited to dimension 2, we include a more geometrical approach to the fixed point index
in the plane.
Let $f : \mathds{R}^2 \to \mathds{R}^2$ be a continuous map and $\gamma : S^1 \to \mathds{R}^2$ be a Jordan curve disjoint
from $\Fix(f)$. One can define another curve $\eta : S^1 \to \mathds{R}^2$
by $\eta(t) = \gamma(t) - f(\gamma(t))$. The index of $f$ along $\gamma$ is defined as the winding
number of $\eta$ around the origin.
The definition does not depend on the parametrization of the curve $\gamma$ and we will often use $\gamma$ to refer
to the image of the curve as well.

Let $p$ be an isolated fixed point of $f$ and $V$ be an open neighborhood of $p$ which does not contain any other fixed point.
The index of $f$ along any Jordan curve $\gamma$ contained in $V$ which winds around $p$ is 
equal to the fixed point index of $f$ at $p$. The closed topological disk bounded by $\gamma$ plays the role of $B$
in the definition.
It is not necessary that $\gamma$ is confined to $V$: if $D$ is a Jordan domain in the plane and $p$ is the only
fixed point contained in $D$ (more precisely, in the interior of $D$) the index of $f$ along the curve
$\partial D$ is again $i(f, p)$.


Henceforth, we assume $d = 2$ unless otherwise stated.

\begin{lemma}\label{lem:indexnotparallel}
Let $p$ be an isolated fixed point of the map $f$ and $\gamma$ be a Jordan curve which winds (in the positive sense) around $p$ and
does not enclose any other fixed point. Suppose the vectors $\vv{px}$ and $\vv{xf(x)}$ point in different
directions at every point $x$ in $\gamma$. Then, $i(f, p) = 1$.
\end{lemma}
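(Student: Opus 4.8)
The plan is to show that the curve $\eta(t) = \gamma(t) - f(\gamma(t))$ is homotopic, through curves missing the origin, to a curve whose winding number around $o$ is manifestly $1$, namely a reparametrization of $\gamma - p$ (which winds once around the origin since $\gamma$ winds once around $p$). The homotopy will be a straight--line homotopy, so the crucial point is to verify that no intermediate curve passes through $o$. Concretely, I would set
\begin{align*}
H(t,s) = s\,\bigl(\gamma(t) - p\bigr) + (1-s)\,\bigl(\gamma(t) - f(\gamma(t))\bigr) = \gamma(t) - \bigl(s\,p + (1-s)\,f(\gamma(t))\bigr)
\end{align*}
for $s \in [0,1]$, so that $H(\cdot,0) = \eta$ and $H(\cdot,1) = \gamma - p$.

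The key step is to check that $H(t,s) \ne o$ for all $t$ and all $s \in [0,1]$. Writing $x = \gamma(t)$, the vanishing $H(t,s) = o$ would mean $x = s\,p + (1-s)\,f(x)$, i.e. $x$ lies on the segment joining $p$ and $f(x)$. If $s = 0$ this says $x = f(x)$, impossible since $\gamma$ avoids $\Fix(f)$. If $s = 1$ this says $x = p$, impossible since $\gamma$ winds around $p$ and hence does not pass through it. For $s \in (0,1)$, $x$ being an interior point of the segment $[p, f(x)]$ forces the vector $\vv{px}$ and the vector $\vv{xf(x)}$ to be positive multiples of the same direction (both point from $x$ ``away from $p$ toward $f(x)$'' along that segment), i.e. they point in the same direction — contradicting the hypothesis. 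Hence $H$ is a valid homotopy in $\mathds{R}^2 \setminus \{o\}$.

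It then follows that the winding number of $\eta$ around $o$ equals the winding number of $\gamma - p$ around $o$. Since $\gamma$ is a Jordan curve winding once in the positive sense around $p$, the translated curve $\gamma - p$ winds once in the positive sense around $o$, so its winding number is $1$. By the geometric description of the index recalled above (the index of $f$ along $\gamma$ is the winding number of $\eta$ around the origin, and it equals $i(f,p)$ because $\gamma$ encloses $p$ and no other fixed point), we conclude $i(f,p) = 1$.

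I expect the only genuinely delicate point to be the case $s \in (0,1)$: one must phrase carefully why $x$ lying strictly between $p$ and $f(x)$ is exactly the ``same direction'' configuration excluded by the hypothesis, and in particular rule out the degenerate sub--case where two of the three points $p$, $x$, $f(x)$ coincide (if $f(x) = x$ we are back to the excluded $s=0$ situation; if $f(x) = p \ne x$ then $\vv{xf(x)} = \vv{xp} = -\vv{px}$ points in the direction opposite to $\vv{px}$, which is a different direction, so $H(t,s) = x - (s p + (1-s)p) = x - p \ne o$, consistent). Everything else is the standard homotopy invariance of winding number and the translation invariance of winding number under $p \mapsto o$, which require no computation.
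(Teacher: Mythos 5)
Your proof is correct and is essentially the paper's argument made explicit: the paper observes that since $\vv{xf(x)}$ never overlaps with $\vv{px}$ the two vectors must make the same number of turns along $\gamma$, and your straight--line homotopy $H(t,s)$ (together with the verification that a zero of $H$ would force $x$ to lie on the segment $[p,f(x)]$, i.e.\ the forbidden same--direction configuration) is precisely the rigorous implementation of that step. Nothing further is needed.
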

\begin{proof}
The vector $\vv{px}$ makes one positive turn as $x$ moves along $\gamma$ and comes back to the starting point.
Since $\vv{xf(x)}$ does not overlap with $\vv{px}$ it must also make one positive turn as $x$ moves along $\gamma$
so, by definition, the index is 1.
\end{proof}

In view of Lemma \ref{lem:indexnotparallel} the most critical parts are those around the places
where the vectors $\vv{px}$ and $\vv{xf(x)}$ point in the same direction.
Describing the dynamics in such pieces will be enough to compute the fixed point index.

\begin{corollary}\label{cor:indexattractor}
Let $\gamma$ be the boundary of a closed disk $D$ centered at $p$ which does not contain any other point fixed by $f$.
If $f(\gamma) \subset D$ then $i(f, p) = 1$.
\end{corollary}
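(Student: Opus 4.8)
The plan is to deduce this directly from Lemma \ref{lem:indexnotparallel}. Since $p$ is the center of $D$ and the only point of $\Fix(f)$ in $D$, the curve $\gamma = \partial D$ is disjoint from $\Fix(f)$ and winds once around $p$, so the index of $f$ along $\gamma$ equals $i(f, p)$; it remains to verify the hypothesis of the lemma, namely that $\vv{px}$ and $\vv{xf(x)}$ never point in the same direction for $x \in \gamma$.

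First I would argue by contradiction: suppose that at some $x \in \gamma$ the vector $\vv{xf(x)}$ is a nonnegative multiple of $\vv{px}$, say $f(x) - x = \lambda (x - p)$ with $\lambda \ge 0$. If $\lambda = 0$ then $f(x) = x$, impossible since $x \in \gamma$ and $\gamma$ contains no fixed point. If $\lambda > 0$, let $R > 0$ denote the radius of $D$, so that $\|x - p\| = R$; then $\|f(x) - p\| = \|(1 + \lambda)(x - p)\| = (1 + \lambda) R > R$, whence $f(x) \notin D$, contradicting $f(\gamma) \subset D$. Therefore no such $x$ exists, and the hypothesis of Lemma \ref{lem:indexnotparallel} holds, giving $i(f, p) = 1$.

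I do not expect a real obstacle here: the content is the elementary observation that any outward radial displacement of a boundary point would carry it strictly outside $D$. The one place that needs a little care is the orientation convention: Lemma \ref{lem:indexnotparallel} only forbids $\vv{px}$ and $\vv{xf(x)}$ from pointing in the \emph{same} direction (antiparallel vectors are allowed), so it suffices to exclude the case $\lambda \ge 0$ above rather than all collinear configurations of the two vectors.
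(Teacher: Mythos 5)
Your proof is correct and is precisely the intended deduction: the paper states the corollary without proof, as an immediate consequence of Lemma \ref{lem:indexnotparallel}, and the argument you supply (an outward radial displacement $f(x) - x = \lambda(x-p)$ with $\lambda > 0$ would push $f(x)$ strictly outside $D$, while $\lambda = 0$ would make $x$ a fixed point on $\gamma$) is exactly the verification the author leaves implicit. Your closing remark about only needing to exclude the parallel, not the antiparallel, configuration is also the right reading of the lemma's hypothesis.
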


\begin{example}
Consider the planar map which fixes the origin and is given in polar coordinates by $(\theta, r) \mapsto (\theta, r - 1)$,
$\theta \in S^1, r \in \mathds{R}$.
Let $\gamma$ be the curve $r = 0$. By Corollary \ref{cor:indexattractor},  the index of the origin is 1.
Let us modify the map in a radial sector so that the dynamics becomes more interesting.
Assume now that $\theta$ takes values in $(-\pi, \pi]$ and define $S = \{(\theta, r): \theta \in [-1,1]\}$
and a map in $S$ by $f_-(\theta, r) = (\theta \cdot c_-(\theta), r + 1 - 2\theta^2)$,
where $c_- : [-1, 1] \to [1/2, 1]$ satisfies $c_-^{-1}(1) = \{-1, 0, 1\}$ and $\theta \cdot c_-(\theta)$ is strictly
increasing in $\theta \in [-1, 1]$.
See Figure \ref{fig:ejemplos}.
Using the original map, $f_-$ extends to a map in the whole plane.
As $x$ moves along the arc $\gamma \cap S$ the vector $\vv{xf(x)}$ makes one turn in the negative sense
so the index must be adjusted by subtracting $1$, $i(f_-, o) = 1 - 1 = 0$.

Note that the vectors $\vv{ox}$ and $\vv{xf_-(x)}$ point in the same direction only at $x = (\theta = 0, r = 0)$ and
the angular dynamics around $\theta = 0$ forces the contribution to the index to be negative,
as $\theta = 0$ is attracting for $\theta \mapsto \theta^3$.

Since any positive iterate $f_-^n$, $n \ge 1$, is topologically conjugate to $f_-$ we conclude that
$i(f_-^n, o) = i(f_-, o) = 0$.


A minor modification in the angular behavior of $f_-$ at $\theta = 0$ reverses the sign of its extra contribution to the index.
Choose $c_+ : [-1, 1] \to [1, 2]$ such that $c_+^{-1}(1) = \{-1, 0, 1\}$ and $\theta \cdot c_+(\theta)$ is
strictly increasing in $\theta \in [-1, 1]$.
Define $f_+(\theta, r) = (\theta \cdot c_+(\theta), r + 1 - 2\theta^2)$ in $S$ and $f_+(\theta, r) = (\theta, r-1)$
otherwise. Now the vector $\vv{xf_+(x)}$ makes one turn in the positive sense as $x$ moves along $\gamma \cap S$
 so $i(f_+, o) = 1 + 1 = 2$.
Notice that the modification in the angular coordinate has reversed the angular dynamics around $\theta = 0$,
it is now repelling.
Again, since $f_+^n$ is conjugate to $f_+$ we obtain that $i(f_+^n, p) = i(f_+, p) = 2$ for every $n \ge 1$.
\end{example}

\begin{figure}[htb]
\begin{center}
\begin{tabular}{l c r}
\includegraphics[scale = 0.7]{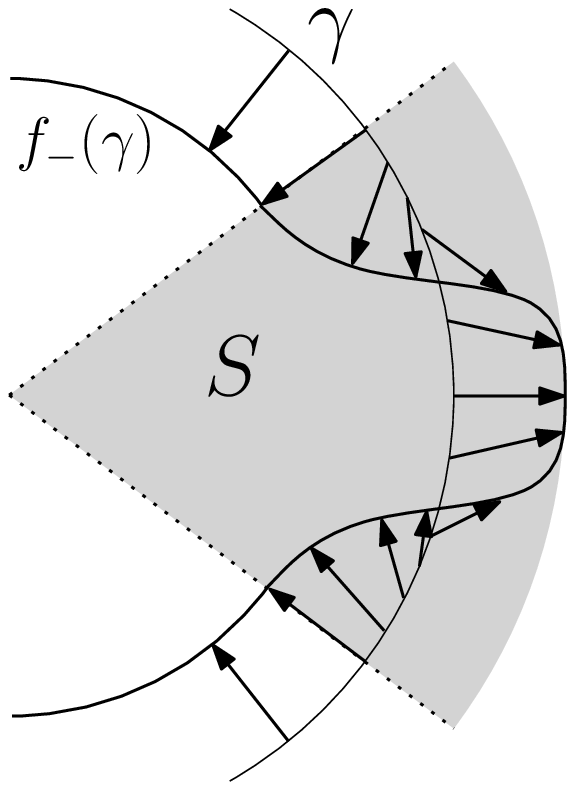} & \hspace{2cm} & \includegraphics[scale =
0.7]{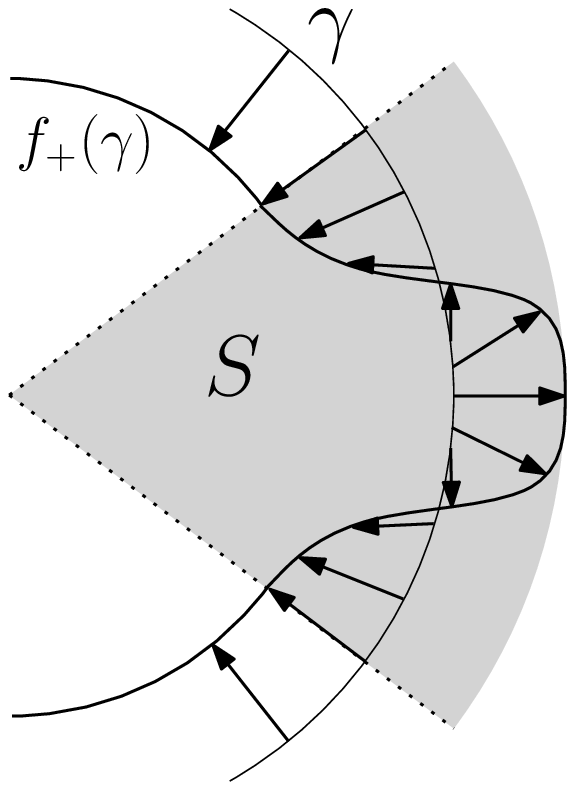}
\end{tabular}
\end{center}
\caption{Qualitative description of maps $f_-$ (left) and $f_+$ (right) within the sector $S$.}
\label{fig:ejemplos}
\end{figure}

\begin{example}\label{ex:iterates}
Let us show a way to modify arbitrarily the index using the previous construction.
Fix an integer $m \ge 1$ and start with the map $(\theta, r) \mapsto (\theta, r - 1)$ as before.
Take any radial sector $S_m$ and divide it into $m$ equal sectors $S_m = T_1 \cup \ldots \cup T_m$. Set the map in each sector $T_i$
to be equal to $f_- : S \to S$, after a suitable affine transformation in the angular coordinate, and
denote $f_{m,-} : S_m \to S_m$ the resulting map.

It is not difficult to check that the origin has now index $i(f_{m,-}, o) = 1 - m$ as the vector
$\vv{xf_{m,-}(x)}$ winds $m$ times in the negative sense as $x$ moves along $\gamma \cap S_m$.

In an analogous fashion one can define $f_{m,+} : S_m \to S_m$ and show that $i(f_{m,+}, o) = 1 + m$.
\end{example}

We finish this section with some general considerations about the fixed point index sequence $I = (i(f^n, p))_n$.
Albrecht Dold \cite{dold} found the unique constraint satisfied by any fixed point index sequence.
Define the normalized sequences $\sigma^k = (\sigma^k_n)_n$ by
\begin{equation*}
\sigma^k_n =
\begin{cases}
k & \text{if } n \in k \mathds{N} \\
0 & \text{otherwise} \\
\end{cases}
\end{equation*}
Dold relations, also called Dold congruences, state that if $I$ is any fixed point index sequence then $I$ is a (formal) integer
combination of normalized sequences, i.e. there exist integers $a_k$, $k \ge 1$, such that
\begin{equation}\label{eq:integercombination}
I = \sum_{k \ge 1} a_k \sigma^k.
\end{equation}
As already commented in the introduction, Dold relations are the only general constraints governing
fixed point index sequences.

\section{Theorem \ref{thm}: Construction}\label{sec:construction}

Let us start with the proof of Theorem \ref{thm}.
Fix from now and on a sequence of integers $(a_k)_k$ which in view of Equation (\ref{eq:integercombination})
uniquely determines the target sequence of fixed point indices $(i(f^n, o))_n$.
We will construct a map $\tilde{f} : \mathds{R}^d \to \mathds{R}^d$ whose only periodic orbit is
the origin, which is a fixed point, and such that
\begin{equation}\label{eq:originreduced}
(i(\tilde{f}^n, o))_n = \sum_{k \ge 1} a_k \sigma^k.
\end{equation}

It is enough to prove the result for $d = 2$. Indeed, if $f$ is a planar map satisfying Theorem \ref{thm}
we split $\mathds{R}^d = \mathds{R}^2 \oplus \mathds{R}^{d-2}$ and define $\tilde{f} = (f, c)$, where $c$
denotes the map that sends every point to the origin. From the multiplicativity of the fixed point index
and the fact that $c$ has index 1 at the origin we obtain that $(i(\tilde{f}^n, o))_n = (i(f^n, o))_n$.
Clearly $\tilde{f}$ satisfies the requirements of Theorem \ref{thm}.

Our target map $f: \mathds{R}^2 \to \mathds{R}^2$ will have the appearance of a skew--product in polar coordinates.
We start with a very simple dynamics in the base, the angular coordinate, namely
$e_2(\theta) = 2 \theta \enskip\mathrm{mod}\enskip 1$ (now we assume $\theta$ ranges in $[0, 1)$).
The dynamics will become richer after we replace the angles in a set of periodic orbits of $e_2$ by
intervals and extend the map to them. The set of periodic orbits $\Lambda$ in which this blow--up procedure
will be carried out is defined by the following two properties:
\begin{itemize}
\item[(i)] $\Lambda$ is composed of exactly one orbit of each period in a set $\mathcal{P} \subset \mathds{N}$.
\item[(ii)] $\Lambda' \cap \Per(e_2) = \emptyset$, i.e. no point of accumulation of $\Lambda$ is periodic under $e_2$.
\end{itemize}
The task of showing that the definition of $\Lambda$ is not vacuous is postponed to Section \ref{sec:symbolic},
where we produce a set satisfying both of its defining properties.
We use the notation $X'$ to refer to the derived set of a set $X$. It is composed of all accumulation points, also called
limit points, of $X$.
Property (i) ensures that $\Lambda'$ is invariant under $e_2$.
Note that (ii) implies that any $x \in \Lambda$ is isolated in $\Lambda$.

First, we begin with a map $f_0$ whose radial dynamics makes all the indices $i(f_0^n, o)$ to be equal to 1.
We group in $\mathcal{P}$ the set of exponents $n$
for which we need to modify the map to achieve the desired value for the index of $f^n$ which,
in view of (\ref{eq:originreduced}), is
\begin{equation}\label{eq:indicesorigin}
i(f^n, o) = \sum_{k | n} k a_k.
\end{equation}
Thus, we set that 1 belongs to $\mathcal{P}$ iff $a_1 \neq 1$ and $k \in \mathcal{P}$ iff $a_k \neq 0$ for $k > 1$.

The map $f_0$ is given in polar coordinates by
$$f_0: (\theta, r) \mapsto (e_2(\theta), r - g_0(\theta)).$$
The stretching in the radial direction is determined by the map $g_0: S^1 \to \mathds{R}$ which we define by
$g_0(\theta) = \dist(\theta, \Lambda')$. Note that (ii) guarantees that the orbit under $f_0$ of
every point whose angular coordinate is periodic under $e_2$ tends to the origin.
However, if $\Lambda' \neq \emptyset$ there are points whose $\omega$-limit is not $\{o\}$ and they are not periodic.

An application of Corollary \ref{cor:indexattractor} shows that $i(f_0^n, o) = 1$ for any $n \ge 1$.
The dynamics of $f_0$ is somehow complicated though. For any $r_0$, $\Lambda' \times \{r_0\}$ is a closed invariant set
contained in the circle $\{r = r_0\}$ 
which does not contain any periodic point. The origin is then far from being locally maximal,
since the maximal invariant set contained in the closed disk $\{r \le r_0\}$
is $(\Lambda' \times \{r \le r_0\}) \cup \{o\}$.

Now, we enrich the dynamics of $e_2$ by replacing each point $\alpha \in \Lambda$ with a non--degenerate interval $J_{\alpha}$.
Let $\pi : S^1 \to S^1$ be the map which realizes this operation, $\pi^{-1}(\alpha) = J_{\alpha}$
 for every $\alpha \in \Lambda$ and $\pi$ is a homeomorphism outside the intervals $J_{\alpha}$.
Then, $e_2$ lifts to a map $h : S^1 \to S^1$ which satisfies $\pi \circ h = e_2 \circ \pi$.
Define accordingly the map $g$ in $\pi^{-1}(S^1 \setminus \Lambda)$ so that $\pi \circ g = g_0 \circ \pi$
and set
$$f: (\theta, r) \mapsto (h(\theta), r - g(\theta)).$$
It is intentionally still left to complete the definition of $g, h$ for angles inside the intervals $J_{\alpha}$
and thus the definition of $f$ within the radial sectors $S_{\alpha}$ determined by them.

Let $\{\alpha_0, \ldots, \alpha_{n-1}\}$ be an orbit of period $n$ under $e_2$ contained in $\Lambda$.
The dynamics of $f$ in the sectors
$S_{\alpha_0}, \ldots S_{\alpha_{n-1}}$ will be responsible for the value of the integer coefficient $a_n$
in (\ref{eq:originreduced}).
We define $f_{|S_{\alpha_i}}: S_{\alpha_i} \to S_{\alpha_{i+1}}$ to be equal to,
after affine rescaling in the angular coordinate, $f_{m,*}$, where $* = + $ or $-$.
The choice of sign and integer $m$ is:

\begin{equation}\label{eq:msign}
(m, *) =
\begin{cases}
(a_1 - 1, +) & \text{if } n = 1 \text{ and } a_1 \ge 2 \\
(1 - a_1, -) & \text{if } n = 1 \text{ and } a_1 \le 0 \\
(a_n, +) & \text{if } n \ge 2 \text{ and } a_n > 0 \\
(-a_n, -) & \text{if } n \ge 2 \text{ and } a_n < 0. \\
\end{cases}
\end{equation}

It is convenient to say a few words about the dynamics of $f$ before focusing on the index computation.
It still shares some of the properties of $f_0$. For instance, for every $r_0 \in \mathds{R}$
$\pi^{-1}(\Lambda') \times \{r_0\}$ is invariant and the maximal invariant set in the closed disk
$\{r \le r_0\}$ is $(\pi^{-1}(\Lambda') \times \{r \le r_0\}) \cup \{o\}$.
Each sector $S_{\alpha}$ is periodic and contains an odd number ($2m + 1$ exactly)
of periodic rays of the same period as $S_{\alpha}$,
which are alternatively attracting or repelling, the two rays in the boundary of $S_{\alpha}$ being attracting.
Here attracting means that the orbit of every point in the ray tends to the origin and repelling means
that it goes to infinity.
The orbit of any other point in $S_{\alpha}$ goes to infinity and approaches a repelling periodic ray
provided the sign in (\ref{eq:msign}) is ``-''. Otherwise, in case the plus sign was assigned
in (\ref{eq:msign}), most of the orbits tend to the origin and approach an attracting periodic ray.

In the complement of the regions already described the behavior of the orbits is not particularly simple. Orbits
of points in rays of angle $\theta$ such that $\theta \notin \pi^{-1}(\Lambda')$
 and whose orbits under $h$ never land in an interval $J_{\alpha}$
tend to the origin. However, any of the previously described asymptotics might occur if the orbit eventually lands
in a point with angle in $\pi^{-1}(\Lambda')$ or in some $J_{\alpha}$.

On top of all the previous discussion, let us emphasize that $\Per(f) = \{o\}$.
Indeed, in any orbit the radial coordinate only stabilizes if the angular coordinate eventually lies in $\pi^{-1}(\Lambda')$.
Since in $\pi^{-1}(\Lambda')$ there are no periodic points under $h$, the origin is an isolated periodic point.

\begin{proposition}\label{prop}
$$i(f^n, o) = \sum_{k | n} k a_k.$$
\end{proposition}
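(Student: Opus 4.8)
The plan is to compute $i(f^n,o)$ as the index of $f^n$ along a fixed circle $\gamma = \{r = r_0\}$, by decomposing $\gamma$ into the arcs lying over the periodic sectors $S_\alpha$ with $\alpha \in \Lambda$ of period dividing $n$, plus the complementary arc. On the complementary arc, $f^n$ should behave essentially like $f_0^n$: points there either have angular coordinate eventually falling in $\pi^{-1}(\Lambda')$ (where $g > 0$ is not guaranteed, but $g \ge 0$ and the radial coordinate only decreases) or their $h$-orbit avoids the blown-up intervals, and in both situations the vectors $\vv{ox}$ and $\vv{xf^n(x)}$ are never parallel — exactly the situation of Lemma \ref{lem:indexnotparallel}. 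So the complementary arc contributes the ``baseline'' winding of $1$, and each relevant sector contributes an extra $\pm m$ according to \eqref{eq:msign}.

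First I would make precise which sectors are ``seen'' by $f^n$. If $\alpha \in \Lambda$ has period $k$ under $e_2$, then $S_\alpha$ is a periodic sector of period $k$ for $f$, so $f^n$ maps $S_\alpha$ into itself iff $k \mid n$; moreover, on $S_\alpha$ the map $f^n = (f^k)^{n/k}$ restricted to the sector is, up to the affine angular rescaling, the $(n/k)$-th iterate of $f_{m,*}$ composed appropriately along the orbit $\{\alpha_0,\dots,\alpha_{k-1}\}$. Here I would invoke the analysis in Example \ref{ex:iterates} together with the observation already made in the examples that $f_{m,\pm}^j$ is topologically conjugate to $f_{m,\pm}$ (the radial and angular dynamics are qualitatively unchanged under iteration): as $x$ traverses the arc $\gamma \cap S_\alpha$, the vector $\vv{xf^n(x)}$ winds $\mp m$ extra times relative to $\vv{ox}$, for each of the $k$ sectors in the orbit — but these $k$ arcs together make up $\gamma \cap \bigcup_i S_{\alpha_i}$, and the orbit is counted once in $\Lambda$, so the net extra contribution of this orbit is $\mp m$, read off from \eqref{eq:msign}.

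Then I would sum: the index along $\gamma$ equals $1$ (the baseline, from Lemma \ref{lem:indexnotparallel} applied to the complementary arc, using that closing up $\vv{ox}$ over all of $\gamma$ gives one positive turn) plus $\sum_\alpha (\text{extra turn of } \vv{xf^n(x)})$ over the orbits of period $k \mid n$ in $\Lambda$. For an orbit of period $k \ge 2$ the extra turn is $a_k$ if $a_k > 0$ and $-(-a_k) = a_k$ if $a_k < 0$, i.e. exactly $a_k$ in all cases (and $0$ if $k \notin \mathcal P$, consistent with $a_k = 0$); for $k = 1$ the contribution is $a_1 - 1$ if $a_1 \ge 2$ and $-(1-a_1) = a_1 - 1$ if $a_1 \le 0$, i.e. $a_1 - 1$, which combined with the baseline $1$ gives $a_1$. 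Hence $i(f^n,o) = a_1 + \sum_{1 < k \mid n} a_k \cdot 1$? — no: each orbit of period $k$ contributes $k$ periodic rays but only one extra turn; the factor $k$ in $\sum_{k\mid n} k a_k$ must come from somewhere else, and this is the crux.

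The main obstacle is precisely accounting for the factor $k$ in \eqref{eq:indicesorigin}. The resolution I expect: although $\Lambda$ contains only \emph{one} orbit of period $k$, the arc $\gamma \cap S_\alpha$ for a single sector $S_\alpha$ of period $k$ already contributes $\mp m$ turns, and there are $k$ such sectors $S_{\alpha_0},\dots,S_{\alpha_{k-1}}$ in the orbit, each contributing $\mp m$ turns to the winding of $\eta = \id - f^n$ along $\gamma$ — so the total extra contribution of the orbit is $\mp km$, not $\mp m$. I would therefore need to verify carefully, using that $f^n$ restricted to each $S_{\alpha_i}$ is individually conjugate to $f_{m,*}$ and that the $k$ arcs $\gamma \cap S_{\alpha_i}$ are disjoint subarcs of $\gamma$, that the winding picks up $m$ (with the appropriate sign) \emph{on each} of the $k$ arcs. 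This gives extra contribution $k a_k$ for $k \ge 2$ and $k(a_1 - 1) = a_1 - 1$ for $k = 1$, so summing over all periods $k$ dividing $n$ with an orbit in $\Lambda$ yields $i(f^n,o) = 1 + (a_1 - 1) + \sum_{1 < k \mid n} k a_k = \sum_{k \mid n} k a_k$, as claimed. The one subtlety to nail down is that on the complementary arc the hypothesis of Lemma \ref{lem:indexnotparallel} genuinely holds for $f^n$ — i.e. that iterating never creates a point where $\vv{ox}$ and $\vv{xf^n(x)}$ align — which follows because off the special sectors the radial coordinate is strictly decreasing (where $g_0 > 0$) or the dynamics is pushed toward $\pi^{-1}(\Lambda')$ without ever producing a radially-outward or radially-fixed displacement.
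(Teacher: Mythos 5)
Your proposal is correct and follows essentially the same route as the paper: compute the index along a circle $\gamma$, show the complementary arc contributes only the baseline $1$ because $\vv{ox}$ and $\vv{xf^n(x)}$ never align there, and add $\pm m$ for each of the $k$ sectors $S_{\alpha_0},\dots,S_{\alpha_{k-1}}$ of every orbit of period $k\mid n$, using that $f^n$ restricted to such a sector is conjugate to $f_{m,*}^{n/k}\simeq f_{m,*}$; your mid-proof hesitation about the factor $k$ is resolved exactly as in the paper (each of the $k$ disjoint arcs $\gamma\cap S_{\alpha_i}$ contributes $m$ turns), yielding $1+(a_1-1)+\sum_{k\ge 2,\,k\mid n}k a_k$.
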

\begin{proof}
The computation of the index will be done using the curve $\gamma$, which is the boundary of a circle
centered at $o$ (anyone works fine).

Consider the collection $\mathcal{J}$ of intervals $J_{\alpha}$ which satisfy $h^n(J_{\alpha}) = J_{\alpha}$.
It contains every $J_{\alpha}$ such that $n$ is a multiple of the period of $\alpha \in \Lambda$.
Consider the union of the radial sectors $S_{\alpha}$ determined by all $J_{\alpha}$ in $\mathcal{J}$
and denote $C$ its complement in the plane. The picture in $C$ is easy to analyze.
Indeed, as a point $x$ moves along a component $J'$ of $\gamma \cap C$ the vector
$\vv{xf^n(x)}$ never points in the same direction as $\vv{ox}$.
Moreover, it starts and ends its tour along $J'$ pointing
to the opposite direction as $\vv{ox}$ because $g(\theta)$ is strictly negative in $\Lambda$ and the endpoints of $J'$ are fixed by $h^n$.
Incidentally, note that the arc $f^n(J')$ might wind around the origin several times but this behavior does not make any impact
in the index because $f^n(J')$ lies inside the disk enclosed by $\gamma$.

Thus, the problem amounts to examine what happens in every arc $\gamma \cap S_{\alpha}$ where $J_{\alpha} \in \mathcal{J}$.
Denote $(m, *)$ the pair associated to $S_{\alpha}$ according to (\ref{eq:msign}).
As noticed in Example \ref{ex:iterates}, all the maps $f_{m,*}^l$, $l \ge 1$, are conjugate.
Up to an affine transformation in the angular coordinate, the restriction of $f^n$ to $S_{\alpha}$ is equal to
$f_{m,*}^l$, where $l = n/k$ and $k$ is the period of $\alpha$.
This remark allows to conclude that, as $x$ moves along $\gamma \cap S_{\alpha}$,
the vector $\vv{xf^n(x)}$ turns exactly $m$ times in the (positive or negative) sense given by $*$.

Consequently, the dynamics in sector $S_{\alpha}$ adds $a_1 - 1$ to the index if $k = 1$ and $a_k$ if $k \ge 2$.
We need to take care of $k$ sectors for each $k$ divisor of $n$ contained in $\mathcal{P}$ so we finally obtain
$$i(f^n, o) = 1 + (a_1 - 1) + \sum_{k \ge 2, k | n} k a_k$$
and the result follows.
\end{proof}

In order to finish the proof of Theorem \ref{thm} it suffices to notice that the result of Proposition \ref{prop}
simply rephrases Equation (\ref{eq:originreduced}).

\section{Symbolic dynamics: Definition of $\Lambda$}\label{sec:symbolic}

This section is devoted to discuss the definition of $\Lambda$. Difficulties arise when trying to meet
property (ii) and this makes the example we present here a bit involved.
Notice that if $\Lambda$ did not satisfy (ii) there would exist points periodic points $\beta$ under $h$ accumulated
by intervals $J_{\alpha}$. Since the absolute value of $g$ is arbitrarily small in $J_{\alpha}$ as
the period of $\alpha$ grows, we would have $g(\beta) = 0$ and so every point in the ray $\{\theta = \beta\}$
would be periodic, contrary to our hypothesis.

Symbolic dynamics eases the analysis of properties from subsets of our system
whose dynamics can be encoded properly. We will construct $\Lambda$ from a subset of a symbolic dynamical system
defined using the Prouhet--Thue--Morse sequence. Several other approaches exist as well. For instance, one may
use a symbolic sequence for which the relative density of each symbol is irrational.

Let $\Sigma_2$ be the set of one--sided infinite sequences in two symbols $\{0,1\}$ and $\sigma$ the shift map in $\Sigma_2$.
The dynamical system $(S^1, e_2)$ is a factor of $(\Sigma_2, \sigma)$. The semiconjugation
$\pi : \Sigma_2 \to S^1$ first sends an infinite sequence $s = d_1 d_2\cdots d_n\cdots$
 to the number $x = \sum_n d_n 2^{-n}$ in $[0,1]$ which has $s$ as binary expansion and then projects it to $S^1$.
For the sake of clarity, here we view $S^1$ as the interval $[0, 1]$ whose endpoints are identified.
The following diagram is commutative

\begin{equation*}
\xymatrix{
\Sigma_2 \ar[d]^{\pi} \ar[r]^{\sigma} & \Sigma_2 \ar[d]^{\pi} \\
S^1 \ar[r]^{e_2} & S^1
}
\end{equation*}


Let $(s_n)_n$ be a sequence of periodic infinite sequences whose periods tend to infinity. Suppose that
$(s_n)_n$ has limit $s$ and $s$ is periodic, i.e. it is made up of the infinite repetition of some word $w$
(a finite sequence of 0's and 1's).
Then, the sequence $s_n$ starts with the word $w$ for sufficiently large $n$.
Furthermore, given any positive integer $k$ it must be the case that $s_n$ starts with the word $w^k$, where
$w^k$ denotes the word $ww\cdots w$ consisting of $w$ repeated $k$ times in a row.
Thus, if for some $k \ge 1$ we ensure that $s_n$ does not start with an instance of the pattern $w^k$,
for any word $w$ and large enough $n$, we prove that the limit of $(s_n)_n$ is not a periodic sequence.

Our goal is to find a set of periodic sequences $A$ in $\Sigma_2$ satisfying the following properties:
\begin{itemize}
\item[(a)] For every $n \ge 1$, $A$ contains a sequence of period $n$.
\item[(b)] $A$ is invariant under $\sigma$, that is, it contains the whole orbit of each of its periodic points.
\item[(c)] There exists an integer $k$ such that at most a finite number of elements of $A$ start with the pattern $w^k$.
\end{itemize}
Once we obtain $A$ we may simply set $\Lambda = \pi(A)$ and check properties (i)--(ii) from the previous section.
The semiconjugation guarantees that any point of $\Lambda$ is periodic. We shall remove from $\Lambda$ all orbits
whose periods do not belong to $\mathcal{P}$. The previous considerations and condition (c) ensure that
property (ii) is satisfied.

The set $A$ will be created using the Prouhet--Thue--Morse sequence, which is defined as follows. Start with the word 0
and at each step do the replacements $0 \mapsto 01$, $1 \mapsto 10$. The words built in the first stages are
$$0, \enskip 01, \enskip 0110, \enskip 01101001, \enskip 0110100110010110, \enskip \ldots$$
The process continues ad infinitum and the words converge (note that their starting subwords are equal)
to an infinite sequence \textbf{t} named after Prouhet, Thue and Morse, who discovered it independently,
$$\textbf{t} = 01101001100101101001011001101001\ldots$$
This sequence exhibits an aperiodic yet recurrent behavior and has shown up in various fields of mathematics.
One striking feature of $\textbf{t}$ makes it interesting to us: it is cube--free, that is it does not contain
any instance of the pattern $w^3$ (a word appearing three times in a row).
This result is part of one of the foundational works on the field of combinatorics on words and was first
proved by Axel Thue in 1912 \cite{thue} and then rediscovered by Marston Morse in 1921 \cite{morse}.
For example the sequences
$$10\textbf{010101}10100110, \enskip 010110\textbf{100100100}101100110 $$
are not cube--free because they contain the words $010101 = 01^3$ and $100100100 = 100^3$, respectively.

A word $v$ is said to be conjugate to $w$ if there are (possibly empty) words $x, y$ such that $w = xy$ and $v = yx$.
The circular word of $w$ is the set consisting of $w$ and all of its conjugates.
There is an evident one--to--one correspondence between periodic infinite sequences whose period is a divisor of $n$
and words of length $n$: any such sequence $s$ is formed by the repetition of a word $w$.
As a tiny trivial remark, note that the orbit of $s$ under the action of the shift map is made up of the infinite sequences generated from each of the conjugates of the word $w$.
For example the circular word of $100$ is $\{100, 001, 010\}$ and the orbit of $s = 100100100\ldots$
under $\sigma$ has period 3:
$$\sigma(s) = 001001001\ldots, \enskip \sigma^2(s) = 010010010\ldots, \enskip \sigma^3(s) = 100100100 \ldots = s.$$

Consider again Prouhet--Thue--Morse infinite sequence
$$\textbf{t} = d_1 d_2 \ldots d_n \ldots$$
and, for every $n \ge 1$, define the word $s_n = d_1\ldots d_n$ of length $n$. Let $s_n^*$ be the infinite
sequence generated by repetition of $s_n$. Set $A$ to be the union of the orbits under $\sigma$ of all the sequences $s_n^*$.

\begin{proposition}
$A$ satisfies properties (a)--(c) above.
\end{proposition}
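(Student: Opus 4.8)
The plan is to verify the three properties (a)--(c) in turn, with the bulk of the work concentrated in (c), which is where the cube-freeness of $\textbf{t}$ enters.

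Property (a) is essentially immediate from the construction: for each $n \ge 1$ the sequence $s_n^*$ is periodic with period dividing $n$, and in fact its minimal period is exactly $n$, because if $s_n = d_1 \ldots d_n$ had a period $p < n$ dividing $n$, the prefix $d_1 \ldots d_n$ of $\textbf{t}$ would be a power $v^{n/p}$ with $n/p \ge 2$, hence contain a square (and with a little care a cube once $n/p\ge 2$ is pushed up, or simply a square, which already contradicts the stronger square-freeness failing — but $\textbf{t}$ is only cube-free, so I must argue more carefully: if $n/p\ge 3$ we get a cube immediately; if $n/p=2$, then $s_n=v^2$, and since $\textbf t$ is cube-free this is allowed, so instead I note $s_{2n}$ would begin with $v^2$ followed by the continuation, and use that $\textbf t$ has no prefix of the form $v^2 v_1$ with $v_1$ the first letter of $v$ — this is again a consequence of cube-freeness applied to a slightly longer prefix). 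The clean statement to extract is: since $\textbf{t}$ is cube-free, for every word $w$ at most finitely many prefixes $s_n$ of $\textbf{t}$ are of the form $w^k$ for some $k$; in particular, taking $w$ a proper period of $s_n$ forces the minimal period of $s_n$ to be $n$ for all large $n$, which gives (a) after discarding the finitely many small exceptional $n$ (or by noting that each period still occurs — the orbit of $s_n^*$ has size equal to the minimal period, and we only need \emph{some} sequence of each period $n$, which can be arranged by choosing the prefix length appropriately).

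Property (b) is a tautology given the definition: $A$ is defined as the union of full $\sigma$-orbits of the sequences $s_n^*$, so it is $\sigma$-invariant by construction, and it consists only of periodic sequences.

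Property (c) is the heart of the matter and the main obstacle. I take $k = 3$. Suppose, for contradiction, that infinitely many elements of $A$ begin with a pattern $w^3$. An element of $A$ is a shift $\sigma^j(s_n^*)$ for some $n$ and some $0 \le j < n$; concretely it is the periodic sequence obtained by repeating a conjugate $w_j$ of $s_n$. Saying $\sigma^j(s_n^*)$ begins with $w^3$ means the infinite periodic word $(w_j)^\infty$ has $w^3$ as a prefix. The key point is to relate such a cube inside a \emph{periodic} word back to a cube inside a genuine prefix of $\textbf{t}$. If $|w^3| = 3|w| \le n$, then $w^3$ is literally a factor of the cyclic word $(w_j)$, hence a factor of $s_n s_n$ (as $s_n s_n$ contains every conjugate of $s_n$ as a factor, indeed $w_j$ appears inside $s_n s_n$), hence a factor of $\textbf t$ once $2n$ is at most the length of $\textbf t$ — always true since $\textbf t$ is infinite; this contradicts cube-freeness of $\textbf t$ directly. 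The remaining case $3|w| > n$ is handled by observing that then $w$ itself is longer than $n/3$ and $w^3$ forces $(w_j)^\infty$ to have period $|w|$; combined with its period $n$ one gets, by Fine--Wilf, a period $\gcd(|w|,n) \le n/3 < |w|$, so $w$ itself contains a short period and hence $w^3$ contains a cube of that shorter primitive block, say $u^3$ with $3|u| \le 3\cdot(n/3) = n$, returning us to the first case and again contradicting cube-freeness of $\textbf t$ after reading $u^3$ off inside $s_n s_n \prec \textbf t$. Either way we reach a contradiction, so only finitely many (in fact, I will be able to say: \emph{no}) elements of $A$ begin with a cube, establishing (c) with $k=3$.

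The one technical subtlety to watch in writing this up carefully is the passage "a cube that is a prefix of the periodic word $(w_j)^\infty$ yields a cube that is a factor of $s_n s_n$": this uses that every conjugate of $s_n$ occurs as a factor of $s_n s_n$, and that a prefix of $(w_j)^\infty$ of length $\le |s_n|$ is a prefix of $w_j s_n$ hence a factor of $s_n s_n$ — all elementary but worth stating explicitly. With (a)--(c) in hand, setting $\Lambda = \pi(A)$ and restricting to periods in $\mathcal P$ gives properties (i)--(ii) exactly as explained in the text preceding the proposition, completing the construction.
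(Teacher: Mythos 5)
There is a genuine gap in your treatment of property (c), and it sits exactly at the point where the exponent matters. You take $k=3$ and argue: a cube $w^3$ that is a prefix of a conjugate $(w_j)^\infty$ of $s_n^*$, with $3|w|\le n$, is a factor of $s_ns_n$, ``hence a factor of $\mathbf{t}$.'' That last implication is false. The word $s_n$ is a \emph{prefix} of $\mathbf{t}$, so $s_n$ itself is a factor of $\mathbf{t}$, but the concatenation $s_ns_n$ is a word you have built by hand and there is no reason for it to occur anywhere in $\mathbf{t}$; in fact the squares occurring in the Prouhet--Thue--Morse sequence are known to have very restricted lengths, so for most $n$ the square $s_ns_n$ is certainly not a factor of $\mathbf{t}$. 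Since your second case (the Fine--Wilf reduction when $3|w|>n$) funnels back into this first case, the whole proof of (c) collapses. Note also that with $k=3$ no argument of this shape can work directly: writing $s_n=xy$ and the conjugate as $yx$, a prefix $w^3$ of $yx$ may be split by the seam between $y$ and $x$ so that neither piece contains a cube, and only the pieces $x$ and $y$ (not their reassembly $yx$ or $s_ns_n$) are guaranteed to be factors of $\mathbf{t}$.

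This is precisely why the paper takes $k=6$: if a conjugate $yx$ of $s_n=xy$ begins with $w^6$ and $6\,|w|<n$, then the seam leaves at least $3|w|$ consecutive letters of $w^6$ on one side, so either $y$ or $x$ contains $w^3$ \emph{as a genuine subword of a prefix of} $\mathbf{t}$, and cube-freeness of $\mathbf{t}$ applies legitimately. Your verifications of (a) and (b) are essentially fine (if somewhat meandering on the minimal-period issue in (a), which the paper dismisses as immediate), but to repair (c) you should either raise the exponent to $6$ and run the two-piece argument above, or supply a genuinely new argument (e.g.\ invoking known results on powers in circular factors of Thue--Morse) if you insist on $k=3$.
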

\begin{proof}
The first two properties follow from the definition so we shall concentrate on (c).
We will prove that no conjugate of $s_n^*$ starts with an instance of the pattern $w^6$ if $n$ is large enough.
In other words, there does not exist any word $w$ such that a conjugate of $s_n$ starts with $w^6$.
Suppose on the contrary that some conjugate $yx$ of $s_n = xy$ starts like that and $n > 6\;\mathrm{length}(w)$.
It follows that either $x$ or $y$ contains the word $w^3$. This fact leads to contradiction because both $x$ and $y$ are
subwords of the Prouhet--Thue--Morse sequence which is known to be cube--free.
\end{proof}

The previous method generates a set of circular words of every length which are 6--power free, i.e.
do not contain an instance of the patter $w^6$. The construction is very elementary and very far from
being optimal. For a more up--to--date account on this topic together with an optimal result we refer the reader to \cite{circular}.

\end{document}